\newtheorem{thm}{Theorem}[section]
\newtheorem{defi}{Definition}[section]
\newcommand{\be}{\begin{equation}}
\newcommand{\ee}{\end{equation}}
\numberwithin{equation}{section}
\newcommand{\bea}{\begin{eqnarray}}
\newcommand{\eea}{\end{eqnarray}}
\newcommand{\beb}{\begin{eqnarray*}}
\newcommand{\eeb}{\end{eqnarray*}}
\begin{document}
\title{ A note on convergence of double sequences in a topological space}
\author{Amar Kumar Banerjee$^{1}$ and Rahul Mondal$^{2}$}
\address{\noindent\newline Department of Mathematics,\newline The University of 
Burdwan, \newline Golapbag, Burdwan-713104,\newline West Bengal, India.}   
\email{akbanerjee@math.buruniv.ac.in, akbanerjee1971@gmail.com}
\email{imondalrahul@gmail.com}

\begin{abstract}
In this paper we have shown that a double sequence in a topological space satisfies certain conditions which in turn are capable to  generate a topology on a non empty set. Also we have used the idea of $I$-convergence of double sequences to study the idea of $I$-sequentially compactness \cite{AB} in the sense of double sequences.

\end{abstract}
\noindent\footnotetext{$\mathbf{2010}$\hspace{5pt}AMS\; Subject\; Classification: 54A20, 40A35, 40A05.\\ 
{Key words and phrases: Double sequences, $d$-limit space, $\textit{I}$-convergence, $\textit{I}$-limit point, $\textit{I}$-cluster point, $\textit{I}$-sequentially compactness.}}
\maketitle

\vspace{0.5in}

\section{\bf{Introduction}}
\noindent The idea of convergence topology on a non empty set i.e., the limit space arises from the properties of convergent sequence in a topological space. Indeed a sequence in a topological space satisfies the following conditions $L(1)$ to $L(3)$ in respect of convergence which in turn are capable to generate a topology on a non empty set which is called convergence topology and the topological space thus obtained is called limit space.\\
$L(1)$: For every element $p \in X$, the sequence $x= \left\{x_{i}\right\}$ where $x_{i} =p$ for all $i \in \mathbb{N} $, converges to $p$.\\
$L(2)$: Addition of finite numbers of terms to a convergent sequence affects neither its convergence nor the limit or limits to which it converges.\\
$L(3)$: If a sequence $x= \left\{x_{i}\right\}$ converges to a limit $p$, then every subsequence of it also converges to the same limit $p$.\\
 Thus in a topological space $(X, \tau)$, as the family of convergent sequences satisfies the conditions $L(1)$ to $L(3)$, it generates another topology $\sigma$ on X. Infact this topology $\sigma$ if finer then $\tau$.\\ 
\indent Here analogously we see that a double sequence in a topological space also satisfies such three types of similar conditions in respect of Pringsheim convergence \cite{PR} of double sequences. But these three conditions do not enable to yield such type of convergence topology on a non empty set. However a double sequence also satisfies two other conditions in a modified form of which the first one together with the conditions  $L(1)$ and $L(2)$ is sufficient to generate the convergence topology and second condition is necessary to correlate the convergence topology with the original topology.\\

The notion of statistical convergence of sequences was given by H. Fast \cite{FH}  and I. J. Schoenberg  \cite{SIJ} as an extension of the notion of convergence a real sequence. Later the idea of statistical convergence was extended to the notion of $I$-convergence of real sequences $($\cite{KP}, \cite{KPMM}    $)$ using the idea of ideal $I$ of subsets of the set of natural numbers. Many works were done in   $($\cite{BV},\cite{DK},\cite{KP},\cite{KPMM},\cite{LD1},\cite{MM} $)$ on $I$-convergence and statistical convergence.\\
For a double sequence the idea of statistical convergence was introduced by Muresaleen and Edely \cite{MU} and by M\'oricz \cite{MO} who introduced the notion statistical convergence for multiple sequences. In \cite{PD1} the idea of $I$-convergence was introduced for double sequences. Many works have been done on $I$-convergence of double sequences in $($\cite{BA}, \cite{DE}, \cite{TR}$)$.\\
\indent Recently some important properties on $I$-convergence and $I^{*}$-convergence of sequences and nets have been studied in a topological space and the idea of $I$-sequentially compactness in a topological space has been given in \cite{AB}. Here we also investigate some properties on the $I$-convergence of double sequences in a topological space and analogously study the idea of $I$-sequentially compactness in the sense of double sequences.

\section{\bf{Preliminaries}}\label{preli}
\begin{defi} $($\cite{PR}$)$
A double sequence $\left\{x_{ij}\right\}$ in a topological space $(X, \tau)$ is said to converge to a point $\xi \in X$ in the Pringsheim's sense if for every open set U containing $\xi$, there exists a $k \in \mathbb{N}$ such that $x_{ij} \in U$ for all $i>k$ and $j>k$.
\end{defi}
\noindent The element $\xi$ is called Pringsheim limit of the double sequence $\left\{x_{ij}\right\}$ and is denoted by $P-lim_{i,j\rightarrow\infty} x_{ij} =\xi$.\\ 
\indent Throughout such type of convergence of double sequences will be called P-convergence or simply convergence. The Pringsheim's limit of a double sequence will be called P-limit or simply limit of the double sequence.
\begin{defi} 
$($\cite{KK}$)$ If $\textit{X}$ is a non-void set then a family of sets $\textit{I}\subset 2^{\textit{X}}$ is called an \textit{ideal} if \newline
$($i$)$ $A,B \in \textit{I}$ implies $A \cup B \in \textit{I}$ and \newline
$($ii$)$ $A\in \textit{I},B\subset A$ imply $B\in \textit{I}.$ 
\end{defi}
The ideal is called \textit{nontrivial} if $\textit{I} \neq \left\{\emptyset\right\}$ and $\textit{X}\notin \textit{I}$. A nontrivial ideal \textit{I} is called \textit{admissible} if it contains all the singleton sets. Several examples of nontrivial admissible ideals may be seen in \cite{KP}.\\ 
\indent Throughout X denotes a topological space, $\mathbb{R}$ for the set of all real numbers, $\mathbb{N}$ for the set of all natural numbers, $A^{c}$ denotes the complement of the set A and $I$ stand for a non trivial ideal of $\mathbb{N} \times \mathbb{N}$ unless otherwise stated.
\begin{defi}$($\cite{KK}$)$ A nonempty family \textit{F} of subsets of a non-void set \textit{X} is called a \textit{filter} if \newline
$($i$)$  $\emptyset\notin \textit{F}$ \newline
$($ii$)$ $A,B\in \textit{F}$ implies $A\cap B\in \textit{F}$ and \newline
$($iii$)$ $A\in\textit{F},A\subset B$ imply $B\in \textit{F}.$ 
\end{defi}  
If \textit{I} is a nontrivial ideal on \textit{X} then $\textit{F}=\textit{F}(\textit{I})=\left\{A\subset \textit{X}:\textit{X}\setminus A \in \textit{I}\right\}$ is clearly a filter on \textit{X} and conversely.
\begin{defi}$($\cite{PD1}$)$
A non trivial ideal $I$ on $\mathbb{N} \times \mathbb{N}$ is called strongly admissible if $\left\{i\right\} \times \mathbb{N}$ and $\mathbb{N}\times\left\{i\right\}$ belong to $I$ for each $i \in \mathbb{N}$.\end{defi}
It should be noted that a strongly admissible ideal is also admissible.
\begin{defi}$($\cite{PD1}$)$
A double sequence $x=  \left\{x_{ij} \right\}$ in a topological space $(X, \tau)$ is said to be $I$-convergent to $x_{0}\in X$ if for any nonempty open set $U$ containing $x_{0}$, $\left\{(m,n)\in \mathbb{N} \times  \mathbb{N}:x_{mn}\notin U\right\}\in I$.
\end{defi}
\noindent In this case we say that $x_{0}$ is the $I$-limit of $x$ and we write $I-lim x_{j,k} =x_{0}$. If $I$ is strongly admissible, then clearly $P$-convergence of $x$ implies $I$-convergence of $x$. But it should be noted that converse may not hold.\\
\indent In \cite{PD1} it is seen that there are ideals of $\mathbb{N} \times \mathbb{N}$ for which $I$-convergence coincides with Pringsheim convergence.

\begin{defi}$($\cite{PD2}$)$
Let $x=\left\{x_{ij}\right\}$ be a double sequence in a topological space $(X,\tau)$. Then $y\in X$ is called an $I$-cluster point of $x$ if for every open set $U$ containing $y$, $\left\{(m,n)\in \mathbb{N} \times \mathbb{N}:x_{mn}\in U\right\} \notin I$.
\end{defi}
\section{\bf{$d$-limit space}}
\indent The addition of terms in a double sequence is not simple as in the case of a single sequence. We now explain the method of addition of terms in a double sequence which can be done in two ways as follows.\\
\noindent$($a$)$ By shifting the elements of a particular row:\\
Let $\left\{x_{ij}\right\}$ be a double sequence in a topological space $(X, \tau)$.
If for some fixed $m\in\mathbb{N}$ we insert an additional term $y$ between two terms $x_{mn}$ and $x_{m \overline{n+1}}$, for $n\in \mathbb{N}$, then we get a new double sequence $\left\{y_{ij}\right\}$ such that for $i=m$, $y_{ij}=x_{ij}$ when $j \leq n$, $y_{ij}=y$ when $j=n+1$ and $y_{ij}=x_{i \overline{j-1}}$ when $j > n+1$ and for $i \neq m$, $y_{ij}=x_{ij}$ for all $j \in \mathbb{N}$. \\
\noindent$($b$)$ By shifting the elements of a particular column:\\
Similarly for some fixed $n \in \mathbb{N}$, we can insert $y$ between $x_{mn}$ and $x_{\overline{m+1} n}$ for some $m \in  \mathbb{N}$. In this case if the new double sequence formed be $\left\{y_{ij}\right\}$, then for $j=n$, $y_{ij}=x_{ij}$ when $i \leq m$, $y_{ij}=y$ when $i=m+1$, $y_{ij}=x_{\overline{i-1}j}$ when $i>m+1$ and for $j \neq n$, $y_{ij}=x_{ij}$ for all $i \in \mathbb{N}$.\\

\indent When we add finite number of terms $y_{1}, y_{2}, \dots y_{r}$ to a double sequence $\left\{x_{ij}\right\}$, we take an arbitrary member from the above r-terms say $y_{i}$, $1 \leq i \leq r$ and we add $y_{i}$ to the double sequence $\left\{x_{ij}\right\}$ as in the above process and a new double sequence $\left\{y_{ij}\right\}$ thus obtained. Then we take another member $y_{j}$, $1 \leq j \leq r$ , $j \neq i$ and adding  $y_{j}$ to the double sequence $\left\{y_{ij}\right\}$ we will have a new double sequence $\left\{z_{ij}\right\}$$($say$)$. Then continue the above process r-times to get the final double sequence by addition of finite number of terms $y_{1}, y_{2}, \dots y_{r}$.\\

\noindent \textbf{Note 3.1.} It should be noted that if $\left\{x_{ij}\right\}$ is a double sequence in a topological space $(X, \tau)$ converging to a point $p \in X$, then the sequence $\left\{x_{i} \right\}$ defined by $x_{i} = x_{ii}$ for all $i \in \mathbb{N}$, converges to $p$.\\

\begin{thm}
In a topological space $(X, \tau)$, the following statements holds for a convergent double sequence.\\
$(i)$ For every element $p \in X$, the double sequence $x= \left\{x_{ij}\right\}$ where $x_{ij} =p$ for all $(i,j) \in \mathbb{N} \times\mathbb{N} $, converges to $p$.\\
$(ii)$ Addition of finite numbers of terms to a convergent double sequence affects neither its convergence nor the limit or limits to which it converges.\\
$(iii)$ If $\left\{x_{ij}\right\}$ is a convergent double sequence in $A \cup B$ which converges to $p$, where A and B are two non empty disjoint subsets in X, then there exists a double sequence $\left\{y_{ij}\right\}$ $($whose range is a subset of the range set of $\left\{x_{ij}\right\}$$)$ either in A or in B consisting of infinitely many terms of $\left\{x_{ij}\right\}$ converging to $p$.\\
$(iv)$ If $\left\{x_{ij}\right\}$ is a double sequence converging to a point $p$ and if a sequence $\left\{x_{n}\right\}$ be formed such that, for each $n \in \mathbb{N}$, $x_{n}$ equals to some $x_{ij}$ where $i>n$ and $j>n$. Then the double sequence  $\left\{y_{ij}\right\}$ where for each $i \in \mathbb{N}$, $y_{ij}=x_{i}$ for all $j \in \mathbb{N}$, converges to $p$.\\
\end{thm}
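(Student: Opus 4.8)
The plan is to treat the four parts separately, since $(i)$ and $(iv)$ are essentially immediate from the definition of P-convergence while $(ii)$ and $(iii)$ carry the real content. For $(i)$, given any open $U$ containing $p$, every term of the constant double sequence equals $p \in U$, so the convergence condition holds with, say, $k=1$; nothing further is needed.

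For $(ii)$ I would first dispose of the insertion of a single term and then induct on the number $r$ of inserted terms, mirroring the sequential insertion process described just before the statement. Suppose $\{x_{ij}\}$ converges to $p$ and $U$ is open with $p \in U$, so there is $k$ with $x_{ij} \in U$ whenever $i>k$ and $j>k$. Consider a single row-insertion in row $m$ producing $\{y_{ij}\}$. The point is that for $i > \max\{k,m\}$ the row index differs from $m$, whence $y_{ij}=x_{ij}$, so $y_{ij}\in U$ as soon as also $j>\max\{k,m\}\ge k$; thus $k'=\max\{k,m\}$ witnesses convergence of $\{y_{ij}\}$ to $p$. The column-insertion case is identical with the roles of $i$ and $j$ interchanged. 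Running the same estimate in reverse (passing from $\{y_{ij}\}$ back to $\{x_{ij}\}$ by deleting the inserted term) gives the converse implication, so the two sequences have exactly the same set of limits; since adding $r$ terms is carried out as $r$ successive single insertions, an induction on $r$ then finishes the argument. The only thing to watch here is the index bookkeeping for the shifted terms, which is why I isolate the single-insertion step first.

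For $(iii)$ the key preliminary observation is a pigeonhole statement about the tails. For each $k$ let $T_k=\{(i,j): i>k,\ j>k\}$, which is nonempty. Since every term lies in $A\cup B$, at least one of the two sets is \emph{cofinal} in the sense that for every $k$ some pair in $T_k$ has its term in that set: if $A$ failed to be cofinal there would be $k_A$ with no $A$-term in $T_{k_A}$, similarly $k_B$ for $B$, and then $T_{\max\{k_A,k_B\}}$ would contain a term belonging to neither $A$ nor $B$, a contradiction. Assuming without loss of generality that $A$ is cofinal, for each $m$ choose $(i_m,j_m)\in T_m$ with $x_{i_m j_m}\in A$ and set $a_m=x_{i_m j_m}$; since $i_m,j_m>m$, convergence of $\{x_{ij}\}$ to $p$ forces $a_m\to p$ as an ordinary sequence. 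Finally define $\{y_{ij}\}$ by letting $y_{ij}=a_i$ depend only on the row index $i$. Its range lies in $A$ and is a subset of the range of $\{x_{ij}\}$, it consists of infinitely many terms of $\{x_{ij}\}$, and for open $U\ni p$, choosing $M$ with $a_m\in U$ for $m\ge M$ gives $y_{ij}=a_i\in U$ whenever $i>M$, hence in particular for $i,j>M$, so $\{y_{ij}\}\to p$.

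Part $(iv)$ follows the same template as the end of $(iii)$. First, $\{x_n\}\to p$: for open $U\ni p$ pick $k$ as above; for $n\ge k$ one has $x_n=x_{ij}$ with $i>n\ge k$ and $j>n\ge k$, so $x_n\in U$. Then, since $y_{ij}=x_i$ depends only on the row index, the same device of spreading a convergent ordinary sequence into a row-constant double sequence used in $(iii)$ shows $\{y_{ij}\}$ converges to $p$. I expect the main obstacle to be not any single deep step but the careful index tracking in $(ii)$ together with the verification in $(iii)$ that the extracted row-constant double sequence simultaneously meets all four requirements, namely the range condition, containment in $A$ or in $B$, infinitude of terms, and convergence to $p$.
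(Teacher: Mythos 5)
Your proposal is correct in all four parts, and for $(i)$ and $(iv)$ it follows essentially the paper's argument; the points of divergence are in $(iii)$ and, to a lesser extent, $(ii)$. For $(iii)$ the paper first passes to the diagonal sequence $x_i = x_{ii}$ (which converges to $p$ by its Note 3.1), applies pigeonhole to extract a subsequence of the diagonal lying entirely in $A$ or in $B$, and then spreads that subsequence into a row-constant double sequence; you instead work directly with the tails $T_k$ of the double sequence, show that one of $A$, $B$ must be cofinal in the tails, and select terms $a_m = x_{i_m j_m}$ with $i_m, j_m > m$. Both routes end with the identical row-constant spreading device, but yours avoids the reduction to single-sequence theory, and the constraint $i_m, j_m > m$ makes the convergence of $\{a_m\}$ immediate and also forces infinitely many distinct index pairs to be used --- a point worth stating explicitly, since the theorem demands that $\{y_{ij}\}$ consist of infinitely many terms of $\{x_{ij}\}$ and your selection as written could otherwise repeat the same pair. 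In $(ii)$ the paper handles all $r$ insertions at once, choosing $(p,q)$ beyond every affected row and column so that $z_{ij}=x_{ij}$ for $i>p$, $j>q$; your single-insertion-plus-induction argument is the same idea organized more carefully, and you additionally prove the deletion (converse) direction, showing the modified sequence has exactly the same limit set --- something the paper's proof leaves implicit even though the phrase ``affects neither its convergence nor the limit or limits'' arguably requires it.
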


\begin{proof}
$(i)$ Any open set containing p, contains all the terms of the double sequence $x$ and hence it converges to $p$.\\
$(ii)$ Let $x= \left\{x_{ij}\right\}$ be a double sequence converging to $l$. Then for any open set U containing $l$, there exists a positive integer $m$ such that $x_{ij} \in U$ for all $i>m$ and $j>m$. Now let r-terms $y_{1}, y_{2}, \dots y_{r}$ be added to the double sequence $x$ and the new double sequence formed be $\left\{z_{ij}\right\}$. So there exists $(p,q) \in \mathbb{N}\times \mathbb{N}$ such that for each $s$, $1 \leq s \leq r$, $y_{s} =z_{i_{s}j_{s}}$ where $i_{s} \leq p$ and $j_{s} \leq q$ and $z_{ij} = x_{ij}$, for $i>p$ and $j>q$.
Now let $k$ be a positive integer greater then $m+p$ and $m+q$ then $z_{ij} \in U$ for all $i>k$ and $j>k$. Hence $\left\{z_{ij}\right\}$ is also converges to $l$.\\
$(iii)$ Let $\left\{x_{ij}\right\}$ be a double sequence in $A \cup B$ converging to $p$ where A and B are two nonempty disjoint sets in X. We consider the sequence $\left\{x_{i}\right\}$ where $x_{i}= x_{ii}$ for all $i \in \mathbb{N}$. Clearly $\left\{x_{i}\right\}$ is in $A \cup B$ which also converges to $p$.
Now at least one of A or B must contains a subsequence of $\left\{x_{i}\right\}$, which converges to $p$. Without any loss of generality we suppose that A contains a subsequence $\left\{x_{i_{k}}\right\}$ of $\left\{x_{i}\right\}$ converging to $p$. Now let us consider a double sequence $\left\{y_{kj}\right\}$ where for each $k \in \mathbb{N}$, $y_{kj} = x_{i_{k}}$ for all $j \in  \mathbb{N}$. Let U be any open set containing $p$. Then there exists a $m \in \mathbb{N}$ such that $x_{i_{k}} \in U$ for all $k>m$. Hence for all $k>m$ and $j>m$, $y_{kj} \in U$. Therefore $\left\{y_{kj}\right\}$ converges to $p$.\\
$(iv)$ Let U be an open set containing $p$. Then there exists a $k \in \mathbb{N}$ such that $x_{ij} \in U$ for all $i>k$ and $j>k$. Hence $x_{n} \in U$ for all $n>k$, since for $n>k$, $x_{n}=x_{ij}$ for some $i,j \in \mathbb{N}$ such that $i>n>k$ and $j>n>k$. Therefore $y_{ij} \in U$ for all $i>k$ and $j>k$. Hence the double sequence $\left\{y_{ij}\right\}$ converges to $p$.\\
\end{proof}
\indent Let $x= \left\{x_{ij}\right\}$ be a double sequence and let $\left\{i_{p}\right\}$ and $\left\{j_{q}\right\}$ be two strictly increasing sequences of natural numbers. Then $\left\{x_{i_{p}j_{q}}\right\}$ is said to be a subsequence of $x$. It should be noted that if A and B are two non empty disjoint sets in a topological space and if $x= \left\{x_{ij}\right\}$ is a double sequence in $A \cup B$, then neither of A and B may contain a subsequence of $x$.
\begin{thm}
If G be an open set in a topological space $(X, \tau)$ then no double sequence lying in $X \setminus G$ has any limit in G. 
\end{thm}
\indent The proof is straightforward and so is omitted.

\begin{thm}
Let X be a given set and let $\Omega$ be a class of double sequences over X. Let the members of $\Omega$ be called convergent double sequences and let each convergent double sequence be associated with an element of X called limit of the convergent double sequence subject to the conditions $(i)$ to $(iv)$ as stated in theorem 3.1.\\
Now let a subset G of X be called open if and only if no convergent double sequence lying in $X \setminus G$ has any limit in G. Then the collection of open sets thus obtained forms a topology $\tau$ on X.
\end{thm}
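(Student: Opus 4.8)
The plan is to verify directly the three axioms of a topology for the collection $\tau$ of sets declared open by the stated criterion: that $\emptyset$ and $X$ lie in $\tau$, that $\tau$ is closed under arbitrary unions, and that $\tau$ is closed under finite intersections. Throughout I would work with the contrapositive form of the definition, namely that a set $G$ \emph{fails} to be open precisely when some convergent double sequence lying entirely in $X\setminus G$ has a limit in $G$. I expect the first two axioms to be routine and the finite-intersection axiom to be the crux, where condition $(iii)$ of Theorem 3.1 does the essential work.

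For the trivial cases: since $X\setminus X=\emptyset$ contains no double sequence whatsoever, the defining condition for $X$ holds vacuously, so $X\in\tau$; and since $\emptyset$ contains no point, no limit can lie in it, so $\emptyset\in\tau$ as well. For arbitrary unions, let $\{G_\alpha\}$ be open and put $G=\bigcup_\alpha G_\alpha$. I would argue by contradiction: if $G$ were not open there would be a convergent double sequence $\{x_{ij}\}$ in $X\setminus G$ with a limit $p\in G$; choosing $\alpha_0$ with $p\in G_{\alpha_0}$ and using $X\setminus G\subseteq X\setminus G_{\alpha_0}$, I obtain a convergent double sequence lying in $X\setminus G_{\alpha_0}$ with limit $p\in G_{\alpha_0}$, contradicting openness of $G_{\alpha_0}$.

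The heart of the proof is the finite-intersection axiom, for which by induction it suffices to treat $G=G_1\cap G_2$ with $G_1,G_2$ open. Suppose $G$ is not open: then there is a convergent double sequence $\{x_{ij}\}$ lying in $X\setminus G=(X\setminus G_1)\cup(X\setminus G_2)$ with a limit $p\in G$. The obstacle here is precisely the phenomenon flagged in the remark following Theorem 3.1: a double sequence lying in the union of two disjoint sets need not have a \emph{subsequence} in either one, so one cannot naively split $\{x_{ij}\}$. This is exactly where condition $(iii)$ is indispensable. I would write $X\setminus G$ as the disjoint union $A\cup B$ with $A=X\setminus G_1$ and $B=G_1\setminus G_2\subseteq X\setminus G_2$, and apply $(iii)$ to produce a double sequence $\{y_{ij}\}$ (with range contained in the range of $\{x_{ij}\}$, made up of infinitely many of its terms) lying wholly in $A$ or wholly in $B$ and still converging to $p$.

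It then remains to derive the contradiction in each alternative. If $\{y_{ij}\}$ lies in $A=X\setminus G_1$, then since $p\in G\subseteq G_1$ this contradicts openness of $G_1$; if $\{y_{ij}\}$ lies in $B\subseteq X\setminus G_2$, then since $p\in G\subseteq G_2$ it contradicts openness of $G_2$. The degenerate cases in which $A$ or $B$ is empty need a separate word, but they are immediate: there the original $\{x_{ij}\}$ already lies in a single $X\setminus G_k$ and the same contradiction applies without invoking $(iii)$. Hence $G=G_1\cap G_2$ is open, and the three axioms together show that $\tau$ is a topology on $X$.
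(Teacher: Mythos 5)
Your proof is correct, and in the crucial finite-intersection step it takes a genuinely different (and tidier) route than the paper. The paper first splits $(G\cap H)^{c}$ into the two disjoint pieces $(G\cup H)^{c}$ and $(G\cap H^{c})\cup(G^{c}\cap H)$, applies condition $(iii)$, rules out the first alternative by invoking the openness of $G\cup H$ (already secured by the arbitrary-union axiom), and then must apply condition $(iii)$ a \emph{second} time to split $(G\cap H^{c})\cup(G^{c}\cap H)$ into $G\cap H^{c}$ and $G^{c}\cap H$ before reaching a contradiction with the openness of $H$ (or $G$). Your disjointification $X\setminus(G_1\cap G_2)=(X\setminus G_1)\cup(G_1\setminus G_2)$ accomplishes the same end with a single application of $(iii)$: each alternative lands the extracted double sequence inside the complement of one of the two given open sets, so the contradiction is immediate, and no appeal to the union case is needed. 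A further small advantage of your write-up is that you explicitly dispose of the degenerate situation where one of the two disjoint pieces is empty (condition $(iii)$ as stated requires both sets to be nonempty), a point the paper's proof passes over in silence; in its first splitting the set $(G\cup H)^{c}$ or the symmetric difference could well be empty, so the paper's argument tacitly needs the same remark. Both proofs agree on the trivial axioms and on arbitrary unions.
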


\begin{proof}
Since $\phi$ contains no element and no convergent double sequence can lie outside X, it follows that X and $\phi$ are open sets.\\
Let $\left\{G_{\alpha} \right\}_{\alpha \in \land}$ be a collection of open sets, where $\land$ is an arbitrary indexing set. Let $G= \bigcup _{\alpha \in \land} G_{\alpha}$ and let us consider a convergent double sequence $x= \left\{x_{ij}\right\}$ in $G^{c}$. Then $x$ will be in $G_{\alpha}^{c}$ for all $\alpha \in \land$. Hence $x$ can not have a limit in $G_{\alpha}$ for all $\alpha \in \land$. Therefore $x$ can not have a limit in G. Thus G is open.\\
Now let G and H be two open sets and if possible let $x= \left\{x_{ij}\right\}$ be a convergent double sequence in $(G\cap H)^{c}$, which has a limit $p$ in $G\cap H$. So by the condition $($iii$)$ at least one of $(G\cup H)^{c}$ and $(G\cap H^{c}) \cup (G^{c}\cap H)$ must contain a convergent double sequence whose range set is a subset of range set of $x$ and which has a limit $p$. If $(G \cup H)^{c}$ contains a convergent double sequence $y= \left\{y_{ij}\right\}$ whose range set is a subset of the range set of $x = \left\{x_{ij}\right\}$ and which has a limit $p$ in $G \cap H \subset G \cup H$, then it will contradict the fact that $G \cup H$ is open. So suppose that $(G \cap H^{c})\cup(G^{c} \cap H)$ contains a convergent double sequence $y= \left\{y_{ij}\right\}$ whose range set is a subset of range set of $x = \left\{x_{ij}\right\}$ and which has a limit $p$. Then again by the condition $($iii$)$ at least one of $(G\cap H^{c})$ and $(G^{c}\cap H)$ must contains a convergent double sequence say $z= \left\{z_{ij}\right\}$ whose range set is a subset of range set of $y$ and which has a limit $p$.
Without any loss of generality assume that $(G\cap H^{c})$ contains the convergent double sequence $z= \left\{z_{ij}\right\}$. Then we have a convergent double sequence $z= \left\{z_{ij}\right\}$ in $H^{c}$ which has a limit $p$ in H, but this is again a contradiction to the fact that H is open. Hence no convergent double sequence in $(G\cap H)^{c}$ can have a limit in $(G\cap H)$. Therefore $(G\cap H)$ is open. Thus $\tau$ forms a topology on X.
\end{proof}
\noindent  We call the topology defined above the convergence topology on X and $(X, \tau)$ is called the double limit space or in short $d$-limit space.
\begin{thm}
Let X be a $d$-limit space with $\Omega$ as the given collection of convergent double sequences and $\tau$ be the resulting convergence topology on X. If $\Sigma$ is the collection of all convergent double sequences determined by the topology $\tau$ on X, then $\Omega \subset \Sigma$.
\end{thm}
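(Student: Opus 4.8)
The plan is to show that every member of $\Omega$, taken with its prescribed limit, is genuinely convergent in the topology $\tau$ in Pringsheim's sense; this is exactly the inclusion $\Omega \subset \Sigma$. So I would fix an arbitrary $x = \{x_{ij}\} \in \Omega$ with associated limit $p$ and argue by contradiction that $x$ converges to $p$ in $(X,\tau)$. The whole argument rests on recognising that condition $(iv)$ of Theorem 3.1 is precisely tailored to this situation.

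Suppose $x$ does not converge to $p$ in $\tau$. Then, unwinding Definition 2.1, there is an open set $U$ with $p \in U$ such that for every $k \in \mathbb{N}$ there exist $i > k$ and $j > k$ with $x_{ij} \notin U$. Taking $k = n$ for each $n \in \mathbb{N}$, I would select a term $x_n := x_{i_n j_n}$ with $i_n > n$, $j_n > n$, and $x_n \in X \setminus U$. Thus the sequence $\{x_n\}$ so chosen satisfies the hypothesis of condition $(iv)$, namely that each $x_n$ equals some $x_{ij}$ with $i > n$ and $j > n$.

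Consequently, the double sequence $\{y_{ij}\}$ defined by $y_{ij} = x_i$ for all $j$ is convergent with limit $p$ by condition $(iv)$, and hence belongs to $\Omega$. By construction every entry $y_{ij} = x_i$ lies in $X \setminus U$, so $\{y_{ij}\}$ is a convergent double sequence lying entirely in $X \setminus U$ whose limit $p$ belongs to $U$. Since $U$ is open, this contradicts the very definition of open sets in the $d$-limit space (as set up in Theorem 3.3, with $G = U$). Therefore $x$ must converge to $p$ in $\tau$, which gives $x \in \Sigma$ and hence $\Omega \subset \Sigma$.

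The one point that needs care — and the reason the four axioms were arranged as they are — is that a double sequence does not admit a subsequence extracted from an arbitrary cofinal family of index pairs, so the naive analogue of the single-sequence property $L(3)$ is unavailable here. Condition $(iv)$ is exactly the device that converts a cofinal collection of ``bad'' terms lying outside $U$ into a bona fide convergent double sequence lying in $X \setminus U$. I expect the main (though modest) obstacle to be the careful verification that the negation of Pringsheim convergence really does supply, for each $n$, a term $x_{ij} \notin U$ with \emph{both} indices exceeding $n$ — this is precisely the input that condition $(iv)$ requires, and once it is secured the contradiction is immediate.
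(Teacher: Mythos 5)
Your proposal is correct and follows essentially the same route as the paper's own proof: negate Pringsheim convergence to an open set $U$ containing the prescribed limit, extract for each $n$ a term $x_{ij}\notin U$ with both indices exceeding $n$, apply condition $(iv)$ of Theorem 3.1 to produce a convergent double sequence lying in $X\setminus U$ with limit in $U$, and contradict the definition of openness in the $d$-limit space. The paper's argument is exactly this, so there is nothing to add.
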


\begin{proof}
Suppose $x= \left\{x_{ij}\right\}$ be a member of $\Omega$ and $l$ be a limit of $x$. Let U be an open set in the $d$-limit space $(X, \tau)$ containing $l$. We claim that there exists a $k \in \mathbb{N}$ such that $x_{ij} \in U$ for all $i>k$ and $j>k$. For otherwise, for every $m \in \mathbb{N}$, there exists a $x_{ij} \in U^{c}$ where $i>m$ and $j>m$. Then we can construct a sequence $\left\{x_{n}\right\}$ in $U^{c}$ where for each $n \in \mathbb{N}$, $x_{n}$ equals to some $x_{ij}$, where $i>n$ and $j>n$. Hence the double sequence $\left\{y_{ij}\right\}$ where for each $i \in \mathbb{N}$, $y_{ij}=x_{i}$ for all $j \in \mathbb{N}$, converges to $l$, by the condition $(iv)$. Therefore we have a double sequence $\left\{y_{ij}\right\}$ in $U^{c}$ which has a limit in U. This contradicts to the fact that U is open. Hence  $\left\{x_{ij}\right\}$ converges to $l$ with respect to $\tau$ and so  $x \in \Sigma$.
\end{proof}
\begin{thm}
Let $\Gamma$ be the family of all convergent double sequences in a topological space $(X, \tau)$ and $\tau ^{'}$ be the convergence topology on X determined by the family $\Gamma$. Then $\tau \subset \tau ^{'}$.
\end{thm}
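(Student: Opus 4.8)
The plan is to unwind the definition of the convergence topology $\tau'$ and then invoke Theorem 3.2 directly. First I would fix an arbitrary set $G \in \tau$ and recall that, by Theorem 3.3 applied to the family $\Gamma$ (which satisfies conditions $(i)$--$(iv)$ of Theorem 3.1, so that $\tau'$ is indeed well defined), a subset of $X$ belongs to $\tau'$ precisely when no member of $\Gamma$ lying in its complement has a limit in it. Thus to prove $G \in \tau'$ it suffices to verify that no convergent double sequence lying in $G^{c}$ has a limit in $G$.

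The key observation is that the members of $\Gamma$ are exactly the $\tau$-convergent double sequences, and the limit attached to each such sequence for the purpose of building $\tau'$ is its $\tau$-limit. Hence the defining condition for $\tau'$-openness of $G$ reads: no $\tau$-convergent double sequence lying in $G^{c}$ has a $\tau$-limit in $G$. But this is precisely the conclusion of Theorem 3.2 applied to the $\tau$-open set $G$. Therefore the condition holds, so $G \in \tau'$, and since $G \in \tau$ was arbitrary we obtain $\tau \subset \tau'$.

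I expect no serious obstacle here; the argument is essentially a matter of matching the definition of $\tau'$ against the statement of Theorem 3.2. The only point requiring care is bookkeeping: one must be sure that the convergence data (which double sequences count as convergent and what their limits are) used to generate $\tau'$ coincide with the original $\tau$-convergence in the Pringsheim sense, so that Theorem 3.2 applies verbatim rather than to some a priori different notion of limit. Together with Theorem 3.4, this inclusion reflects the expectation voiced in the introduction that the convergence topology is finer than the topology one started from.
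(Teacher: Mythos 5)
Your proposal is correct and takes essentially the same route as the paper: fix a $\tau$-open set $G$, observe that $\tau'$-openness of $G$ means no $\tau$-convergent double sequence in $G^{c}$ has a $\tau$-limit in $G$, and note that this is exactly what holds for $\tau$-open sets. The only cosmetic difference is that you cite Theorem 3.2 for this last fact, whereas the paper re-runs the short contradiction argument inline (which is precisely the omitted proof of Theorem 3.2).
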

\begin{proof} Indeed each member of $\Gamma$ satisfies the conditions $(i)$ to $(iv)$, by theorem 3.1.
Let G be a $\tau$-open set. If possible let G be not $ \tau ^{'}$-open. Then there exists a double sequence $x= \left\{x_{ij}\right\}$ in $G^{c}$ which has a limit $p$ in G in the sense of theorem 3.3. Since G is $\tau$-open set and $p \in G$. So there exists a positive integer $m$ such that $x_{ij} \in G$ for all $i>m$ and $j>m$. This is a contradiction to the fact that $x= \left\{x_{ij}\right\}$ is in $G^{c}$. Hence G must be $ \tau ^{'}$-open.
\end{proof}
\section{\bf{$I$-convergence of double sequences}}
\noindent Recall that a point $p$ in a topological space  $(X, \tau)$  is called an $\omega$-accumulation point of a  subset A if every neighbourhood of $p$ intersects A in infinitely many points.
\begin{thm}
Let $I$ be a non trivial ideal of the set $\mathbb{N} \times \mathbb{N}$ consisting of all finite subsets of $\mathbb{N} \times \mathbb{N}$. If every infinite set in a topological space $(X, \tau)$ has an $\omega$-accumulation point, then every double sequence $\left\{x_{ij}\right\}$ has an $I$-cluster point.
\end{thm}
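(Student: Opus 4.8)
The plan is to unwind the definition of $I$-cluster point for the specific ideal $I$ in the hypothesis, and then to split the argument according to the cardinality of the range of the double sequence. Since $I$ is precisely the ideal of all finite subsets of $\mathbb{N} \times \mathbb{N}$, a set belongs to $I$ if and only if it is finite, so a point $y$ is an $I$-cluster point of $x = \left\{x_{ij}\right\}$ exactly when, for every open set $U$ containing $y$, the index set $\left\{(m,n) \in \mathbb{N} \times \mathbb{N} : x_{mn} \in U\right\}$ is infinite. This reformulation is what I would use throughout, since it turns the ideal-theoretic condition into a purely combinatorial one about how often the double sequence visits a neighbourhood.

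First I would dispose of the case in which the range $A = \left\{x_{ij} : (i,j) \in \mathbb{N} \times \mathbb{N}\right\}$ is finite. Here a pigeonhole argument applies: because $\mathbb{N} \times \mathbb{N}$ is infinite while $A$ is finite, some value $a \in A$ must be attained on an infinite index set $\left\{(m,n) : x_{mn} = a\right\}$. Then for any open $U$ containing $a$ we have the inclusion $\left\{(m,n) : x_{mn} \in U\right\} \supseteq \left\{(m,n) : x_{mn} = a\right\}$, and the right-hand set is infinite, so $a$ is an $I$-cluster point. Note that this case does not even invoke the $\omega$-accumulation hypothesis.

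Next I would treat the case in which $A$ is infinite, where the hypothesis finally enters. By assumption the infinite set $A$ possesses an $\omega$-accumulation point $p$, so every neighbourhood of $p$ meets $A$ in infinitely many points. The key step is the passage from \emph{infinitely many points of $A$} to \emph{infinitely many indices}: for any open $U$ containing $p$, the infinitely many \emph{distinct} points of $A \cap U$ must arise from distinct indices $(m,n)$, whence $\left\{(m,n) : x_{mn} \in U\right\}$ is infinite. Therefore $p$ is an $I$-cluster point, and since the two cases are exhaustive the proof is complete.

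I expect the only point requiring genuine care to be this last passage from accumulation of points to infinitude of indices, for it is exactly here that the distinction between a double sequence and its range set is felt; but it presents no real obstacle, since distinct values are necessarily assumed at distinct indices, and infinitely many distinct values in $U$ therefore force an infinite preimage.
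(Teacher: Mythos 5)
Your proof is correct and follows essentially the same route as the paper's: both split on whether the range set of $\left\{x_{ij}\right\}$ is finite or infinite, use pigeonhole to find a value attained at infinitely many indices in the finite case, and apply the $\omega$-accumulation hypothesis to the range set in the infinite case, noting that infinitely many distinct values in an open set force infinitely many indices. Your explicit reformulation of the $I$-cluster condition as ``the index set is infinite'' is a slight expository improvement but not a mathematical difference.
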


\begin{proof}
Let $\left\{x_{ij}\right\}$ be a double sequence in X. If the range set of $\left\{x_{ij}\right\}$ is infinite then let us denote it by M. Then M has an $\omega$-accumulation point $y$ $($say$)$. Now let V be an open set containing $y$. Then clearly V contains infinitely many members of M. Hence the set $\left\{ (i,j)\in \mathbb{N} \times \mathbb{N}: x_{ij} \in V\right\}$ is infinite. Therefore $\left\{ (i,j)\in \mathbb{N} \times \mathbb{N}: x_{ij}\in V\right\} \notin I$. Thus $y$ becomes $I$-cluster point of $\left\{x_{ij}\right\}$.\\
If the range set of $\left\{x_{ij}\right\}$ is finite then there exists a $y \in X$ such that $x_{ij} = y$ for infinitely many $(i,j)\in \mathbb{N} \times \mathbb{N}$. Then for every open set V containing $y$, $\left\{ (i,j)\in \mathbb{N} \times \mathbb{N}: x_{ij}\in V\right\} $ is an infinite set. Hence $\left\{ (i,j)\in \mathbb{N} \times \mathbb{N}: x_{ij}\in V\right\} \notin I$. Thus $y$ becomes an $I$-cluster point of $\left\{x_{ij}\right\}$.
\end{proof}
\noindent \textbf{Note 4.1.} The converse of the above result is also true in a topological space for a single sequence. But for a double sequence the converse holds if the ideal $I$ contains all sets of the form $H \times \mathbb{N}$, where H is a finite subset of $\mathbb{N}$.
\begin{thm}
Let $I$ be a non trivial ideal of the set $\mathbb{N} \times \mathbb{N}$ which contains the subsets of $\mathbb{N} \times \mathbb{N}$ of the form $H \times \mathbb{N}$, where H is a finite subset of $\mathbb{N}$. Then if every double sequence in a topological space $(X, \tau)$ has an $I$-cluster point, then every infinite subset of X has an $\omega$-accumulation point.
\end{thm}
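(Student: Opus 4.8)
The plan is to argue directly: given an infinite subset $A \subseteq X$, I will manufacture a double sequence whose $I$-cluster point (guaranteed by hypothesis) is forced to be an $\omega$-accumulation point of $A$. The construction must be chosen so that the $I$-cluster point condition translates into the infinitude of the relevant index set, and for that the special form of the ideal will be the decisive ingredient.

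First, since $A$ is infinite, I would choose a sequence $a_1, a_2, a_3, \ldots$ of \emph{distinct} points of $A$. From these I build a double sequence with constant rows by setting $x_{ij} = a_i$ for every $j \in \mathbb{N}$. By the hypothesis this double sequence $\left\{x_{ij}\right\}$ has an $I$-cluster point, call it $y$, and I claim that $y$ is the desired $\omega$-accumulation point of $A$. To verify the claim I would unwind the cluster-point condition: fix any open set $U$ containing $y$. Because each row is constant, $\left\{(i,j) \in \mathbb{N} \times \mathbb{N} : x_{ij} \in U\right\} = S \times \mathbb{N}$, where $S = \left\{i \in \mathbb{N} : a_i \in U\right\}$, and the definition of $I$-cluster point says precisely that $S \times \mathbb{N} \notin I$.

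The crucial step is where the hypothesis on the ideal enters. If $S$ were finite, then $S \times \mathbb{N}$ would be exactly a set of the form $H \times \mathbb{N}$ with $H$ a finite subset of $\mathbb{N}$, and such sets lie in $I$ by assumption; this would contradict $S \times \mathbb{N} \notin I$. Hence $S$ must be infinite, so $U$ contains $a_i$ for infinitely many indices $i$. Since the $a_i$ are distinct, $U$ therefore meets $A$ in infinitely many points. As $U$ was an arbitrary open set containing $y$, this shows that $y$ is an $\omega$-accumulation point of $A$, completing the argument.

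I expect the only genuine subtlety to be the recognition that the hypothesis ``$H \times \mathbb{N} \in I$ for every finite $H$'' is exactly what upgrades the weak conclusion ``$S \times \mathbb{N} \notin I$'' to the strong conclusion ``$S$ is infinite''; for a general ideal one could only infer that $S$ is nonempty, which would not suffice. The distinctness of the points $a_i$ is equally essential, since it is what converts ``infinitely many indices $i$'' into ``infinitely many points of $A$''; everything else in the proof is routine bookkeeping.
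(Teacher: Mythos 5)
Your proposal is correct and follows essentially the same route as the paper: the same constant-row double sequence $x_{ij}=a_i$ built from distinct points of $A$, with the $I$-cluster point serving as the $\omega$-accumulation point. In fact your write-up is slightly more careful than the paper's, since you make explicit the step the paper leaves implicit, namely that $\left\{(i,j): x_{ij}\in U\right\}=S\times\mathbb{N}$ and the hypothesis $H\times\mathbb{N}\in I$ for finite $H$ forces $S$ to be infinite.
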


\begin{proof}
Suppose that every double sequence in $(X, \tau)$ has an $I$-cluster point. Let A be an infinite subset of X. Then there exists a sequence $\left\{a_{i}\right\}_{i\in \mathbb{N}}$ of distinct points of A. Now let us define a double sequence $\left\{x_{ij}\right\}$ in A where for each $i \in \mathbb{N}$, $x_{ij}= a_{i}$ for all $j \in \mathbb{N}$. Let $y$ be an $I$-cluster point of $\left\{x_{ij}\right\}$. Then for any open set V containing $y$, $\left\{ (i,j)\in \mathbb{N} \times \mathbb{N}: x_{ij}\in V\right\} \notin I$. Hence V contains infinitely many members of A. Hence $y$ becomes $\omega$-accumulation point of A.
\end{proof}
\begin{thm}
Let $I$ be an non trivial ideal of $\mathbb{N} \times \mathbb{N}$ containing all the sets of the form $H \times \mathbb{N}$, where H is a finite subset of $\mathbb{N}$. If $(X, \tau)$ is a Lindel\"of space such that every double sequence has an $I$-cluster point then $(X, \tau)$ is compact.
\end{thm}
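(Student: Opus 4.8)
The plan is to prove compactness via the Lindelöf property. Since $(X,\tau)$ is Lindelöf, every open cover has a countable subcover, so it suffices to show that $(X,\tau)$ is countably compact — that is, every countable open cover admits a finite subcover. I would argue by contrapositive: assume $(X,\tau)$ is \emph{not} compact, hence not countably compact, and produce a double sequence with no $I$-cluster point, contradicting the hypothesis. The natural bridge to the $I$-cluster point condition is the characterization of countable compactness in terms of $\omega$-accumulation points, combined with Theorem 4.2, which already tells us that the $I$-cluster point hypothesis forces every infinite set to have an $\omega$-accumulation point.

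**First I would** invoke Theorem 4.2 directly. The ideal $I$ contains all sets $H\times\mathbb{N}$ with $H$ finite, so the hypotheses of Theorem 4.2 are met, and since every double sequence has an $I$-cluster point, every infinite subset of $X$ has an $\omega$-accumulation point. The key classical fact I would then use is that a space in which every infinite subset has an $\omega$-accumulation point is exactly a countably compact space (this is the standard equivalence between countable compactness and the Bolzano--Weierstrass property for $T_1$ or general spaces, depending on the conventions in force). **Next I would** combine countable compactness with the Lindelöf hypothesis: given any open cover $\mathcal{U}$ of $X$, Lindelöfness extracts a countable subcover, and countable compactness then extracts a finite subcover from that. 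Hence every open cover has a finite subcover, so $X$ is compact.

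**The main obstacle** I anticipate is the precise status of the equivalence ``every infinite set has an $\omega$-accumulation point $\iff$ countably compact,'' which in full generality (without separation axioms) can require care: the cleanest route in an arbitrary topological space is to show the contrapositive directly rather than quote the equivalence as a black box. Concretely, if a countable open cover $\{G_n\}$ had no finite subcover, I would choose for each $n$ a point $p_n \notin G_1\cup\cdots\cup G_n$; the set $\{p_n\}$ is then either finite or infinite. If infinite, it would have an $\omega$-accumulation point $y$ lying in some $G_m$, yet $G_m$ contains only finitely many $p_n$ (those with $n<m$), a contradiction; if only finitely many distinct values occur, some value repeats infinitely often and the same $G_m$-argument applies. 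This gives countable compactness directly from the $\omega$-accumulation-point property, sidestepping any reliance on separation axioms, and then the Lindelöf-plus-countably-compact step finishes the proof. I would present the argument in this self-contained form to keep the logic transparent.
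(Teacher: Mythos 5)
Your proof is correct, but it follows a genuinely different route from the paper's. The paper gives a single self-contained argument: from a countable subcover $\{A_n\}$ supplied by the Lindel\"of property, assuming no finite subcover exists, it disjointifies the cover (taking $B_1=A_1$, $B_m$ the first $A_i$ not covered by $B_1\cup\cdots\cup B_{m-1}$, and $C_m=B_m\setminus(B_1\cup\cdots\cup B_{m-1})$), picks $a_i\in C_i$, and applies the $I$-cluster point hypothesis to the row-constant double sequence $x_{ij}=a_i$: a cluster point lies in some $B_p$, the ideal hypothesis forces some $a_m\in B_p$ with $m>p$, contradicting $a_m\in C_m$ and $C_m\cap B_p=\emptyset$. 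You instead modularize: Theorem 4.2 converts the $I$-cluster point hypothesis into the $\omega$-accumulation-point property for infinite sets; your contrapositive argument (choosing $p_n\notin G_1\cup\cdots\cup G_n$, with separate treatment of the cases where $\{p_n\}$ is infinite or takes finitely many values) establishes countable compactness with no separation axioms; and Lindel\"of plus countably compact yields compact. Your decomposition cleanly isolates the ideal-theoretic content (all of it sits inside Theorem 4.2, whose proof uses exactly the same row-constant-sequence mechanism the paper redeploys inside its proof of this theorem) from purely classical general topology, and it recovers the paper's Theorem 4.4 ($I$-sequential compactness implies countable compactness) as a byproduct; the paper's route, by contrast, is self-contained and exhibits the contradiction concretely via the disjointification, at the cost of duplicating work. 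Your caution about the equivalence of countable compactness with the accumulation-point property was well placed: the $\omega$-accumulation-point form you use does hold in arbitrary spaces, and your explicit case analysis (including the repeated-value case) closes that gap correctly rather than quoting a $T_1$-dependent statement.
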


\begin{proof}
Let $\mathcal{A}=\left\{A_{\alpha} :\alpha \in \Lambda\right\}$ be an open cover for X, where $\Lambda$ is an arbitrary indexing set. Since X is Lindel\"of, there exists a countable sub cover $\left\{ A_{1},A_{2}, \dots A_{n} \dots\right\}(say)$ of $\mathcal{A}$ for X. Let $B_{1}=A_{1}$ and for each $m>1$ let $B_{m}$ be the first member of $A_{i}$'s which is not covered by $B_{1} \cup B_{2}\cup \dots \cup B_{m- 1}$.
If this choice becomes impossible at any stage then the collection of selected sets becomes a finite sub cover for X. Otherwise we can construct a countable family $\left\{C_{i}\right\}_{i \in \mathbb{N}}$ of sets as follows: $C_{1}=B_{1}=A_{1}$, $C_{2}= B_{2} \setminus B_{1}$, $C_{3}= B_{3} \setminus B_{1} \cup B_{2}$, ..............................and so on.
Then clearly $C_{i} \cap C_{j} = \phi$ for $i \neq j$ and $C_{i} \neq \phi$ for all $i \in \mathbb{N}$. For each $i \in \mathbb{N}$ let us choose an arbitrary element $a_{i}$ in $C_{i}$ and construct a double sequence $\left\{x_{ij}\right\}$ where for each $i \in \mathbb{N}$, $x_{ij}= a_{i}$ for all $j \in \mathbb{N}$. Now let $y$ be an $I$-cluster point of $\left\{x_{ij}\right\}$ and let $y \in B_{p}$. So the set $M=\left\{(i,j) \in \mathbb{N} \times \mathbb{N} : x_{ij} \in B_{p}\right\} \notin I$. Since $I$ contains all the sets of the form $H \times \mathbb{N}$ where H is a finite subset of $\mathbb{N}$, M must be of the form $H \times K$ where H is an infinite subset of $\mathbb{N}$. So there exists $m>p$ such that $x_{mn} \in B_{p}$.
According to the construction, $x_{mn}=a_{m}$ must belong to any one of $C_{1},C_{2} \dots C_{p}$ but $x_{mn} \notin C_{m}$, since $m>p$. By the construction of the double sequence, $x_{mn}=a_{m}$ must belongs to $C_{m}$, which is a contradiction. Thus the result follows.
\end{proof}
\begin{defi}$($cf. \cite{AB}$)$
A topological space $(X,\tau)$ is said to be $I$-sequentially compact in the sense of double sequences if every double sequence in $X$ has an $I$-cluster point where $I$ is a non-trivial ideal of the set $\mathbb {N} \times \mathbb{N}$.
\end{defi}
\begin{thm}
Let $I$ be a non trivial ideal of $\mathbb{N} \times \mathbb{N}$ containing all the sets of the form $H \times \mathbb{N}$, where H is a finite subset of $\mathbb{N}$. Then if $(X, \tau)$ is $I$-sequentially compact, $(X, \tau)$ becomes a countably compact space.
\end{thm}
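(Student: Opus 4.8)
The plan is to deduce countable compactness from the Bolzano--Weierstrass-type property already isolated in Theorem 4.2, rather than argue from scratch. Since the ideal $I$ contains every set of the form $H \times \mathbb{N}$ with $H$ finite, and since $(X,\tau)$ being $I$-sequentially compact means precisely that every double sequence in $X$ has an $I$-cluster point (Definition 4.1), the hypotheses of Theorem 4.2 are met verbatim. Applying it yields at once that every infinite subset of $X$ has an $\omega$-accumulation point. The entire remaining burden is then the purely topological implication: a space in which every infinite set has an $\omega$-accumulation point is countably compact. This is a classical fact, and I would include a short self-contained argument so the paper need not quote an external source.

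To prove that implication I would argue by contradiction against a fixed countable open cover. Let $\{U_n\}_{n \in \mathbb{N}}$ be a countable open cover of $X$ and pass to the increasing cover $V_n = U_1 \cup \dots \cup U_n$; note that $\{U_n\}$ admits a finite subcover if and only if $V_n = X$ for some $n$. Assuming there is no finite subcover, each $V_n \neq X$, so I may choose a point $x_n \in X \setminus V_n$ for every $n$ and set $A = \{x_n : n \in \mathbb{N}\}$. The key preliminary step is to check that $A$ is an infinite \emph{set}: were $A$ finite, some single value would equal $x_n$ for infinitely many indices $n$, and because the $V_n$ increase this value would then lie outside $V_n$ for every $n$, hence outside $\bigcup_n V_n = X$, which is impossible.

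With $A$ infinite, the $\omega$-accumulation property supplies a point $y$ each of whose neighbourhoods meets $A$ in infinitely many points. Since the $V_n$ cover $X$, I can pick $m$ with $y \in V_m$, giving an open neighbourhood $V_m$ of $y$. But $x_n \notin V_n \supseteq V_m$ for every $n \geq m$, so $V_m \cap A$ is contained in the finite set $\{x_1, \dots, x_{m-1}\}$, contradicting that $y$ is an $\omega$-accumulation point of $A$. Hence some $V_m = X$, producing a finite subcover, and $(X,\tau)$ is countably compact. As an alternative route I note one could instead inline the disjointification argument of Theorem 4.3 applied directly to the countable cover (the Lindel\"of reduction there being unnecessary), building a double sequence from the pieces $C_i$ and reaching a contradiction through its $I$-cluster point.

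I expect no serious obstacle, since the substantive work has already been done in Theorem 4.2; the only point requiring genuine care is the distinction between the set $A$ and the sequence $(x_n)$, because the $\omega$-accumulation hypothesis speaks of infinite sets, so the repetition argument guaranteeing that $A$ is infinite must be made explicit. The subsequent estimates are routine, and it is worth remarking that the whole implication holds in an arbitrary topological space, requiring no separation axiom on $X$.
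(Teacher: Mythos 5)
Your proof is correct, but it takes a genuinely different route from the paper's. The paper argues directly from the cover: assuming $\{V_n\}$ is a countable open cover with no finite subcover, it picks $a_n \in X \setminus \bigcup_{i=1}^{n} V_i$, forms the constant-in-$j$ double sequence $x_{ij}=a_i$, extracts an $I$-cluster point $x_0 \in V_r$, and uses the hypothesis on $I$ to produce $m>r$ with $a_m \in V_r$, contradicting $a_m \in X \setminus \bigcup_{i=1}^{m} V_i$ (which forces $a_m \notin V_r$ since $r \le m$). You instead factor the argument: the hypotheses here match those of Theorem 4.2 verbatim, so that theorem immediately gives that every infinite subset of $X$ has an $\omega$-accumulation point, and you then supply the purely classical implication that this property yields countable compactness. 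The mathematical core is the same in both cases --- the proof of Theorem 4.2 uses exactly the same constant-in-$j$ double sequence device --- but your decomposition is cleaner: it reuses work the paper has already done, confines the ideal-theoretic content to one lemma, and your classical argument (including the necessary check that the set $A=\{x_n : n\in\mathbb{N}\}$ is infinite despite possible repetitions, which is the one step where care is genuinely required) is carried out correctly and, as you note, needs no separation axioms. Your route also sidesteps a small imprecision in the paper's proof: the paper asserts that the set $M=\{(i,j)\in\mathbb{N}\times\mathbb{N} : x_{ij}\in V_r\}\notin I$ ``must be of the form $K\times J$ where $K$ is an infinite set,'' which is not literally correct since $M$ need not be a product set; what is true, and what both arguments actually need, is that the projection of $M$ onto the first coordinate is infinite, for otherwise $M$ would be contained in some $H\times\mathbb{N}$ with $H$ finite and hence would belong to $I$. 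Your phrasing via $\omega$-accumulation points avoids having to make that claim at all.
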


\begin{proof}

Let $(X, \tau)$ be an $I$-sequentially compact space. If possible let $\left\{V_{n}\right\}_{n=1} ^{\infty}$ be a countable open cover of X which has no finite subcover. Hence $X \setminus \bigcup _{i=1} ^{n} V_{i} \neq \phi$ for all $n \in \mathbb{N}$. Now let us take an arbitrary element $a_{n} \in X \setminus \bigcup _{i=1} ^{n} V_{i}$ for each $n \in \mathbb{N}$ and consider the double sequence $\left\{x_{ij}\right\}$ where for each $n \in \mathbb{N}$, $x_{ij}=a_{i}$ for all $j \in \mathbb{N}$. Now the double sequence $\left\{x_{ij}\right\}$ has an $I$-cluster point $x_{0}(say)$ in X, since $(X,\tau)$ is $I$-sequentially compact. Let $x_{0} \in V_{r}$ for some $r \in \mathbb{N}$. Then from the definition of $I$-cluster point, the set $M= \left\{(i,j) \in \mathbb{N} \times \mathbb{N}: x_{ij} \in V_{r} \right\} \notin I$. Since $I$ contains all the sets of the form $H \times \mathbb{N}$ where H is a finite subset of $\mathbb{N}$, M must be of the form $K \times J$ where K is an infinite set. So there exists $(m,n) \in \mathbb{N} \times \mathbb{N}$ with $m>r$ such that $x_{mn}\in V_{r}$. But $x_{mn} \in X \setminus \bigcup _{i=1} ^{m} V_{i}$, that is, $x_{mn} \notin V_{r} $ which is a contradiction. So X must be countably compact.
\end{proof}
\indent Note that if $I$ is a strongly admissible ideal of $\mathbb{N} \times \mathbb{N}$, and if a double sequence $\left\{x_{ij}\right\}$ in a topological space $(X, \tau)$ is convergent in the sense of Pringsheim to $x_{0}$, then $\left\{x_{ij}\right\}$ is $I$-convergent to $x_{0}$.\\
Now we recall the following widely known result which will be needed in the next theorem.   \newline
\noindent \textbf{Lemma.} In a topological space $(X,\tau)$ the following are equivalent.   \\
(a) $(X,\tau)$ is countably compact.  \\
(b) Every countable collection of closed subsets of $X$ satisfying the finite intersection property has non-empty intersection.  \\
(c) If $F_{1}\supset F_{2}\supset F_{3}\supset \cdots \supset F_{n}\supset \cdots$ is a descending family of non-empty closed subsets of $X$, then $\bigcap_{n=1}^{\infty} F_{n} \neq \emptyset$. \\
\begin{thm}
Let $I$ be a strongly admissible ideal of $\mathbb{N} \times \mathbb{N}$. If $(X,\tau)$ is a first countable countably compact space then $(X,\tau)$ becomes an $I$-sequentially compact space in the sense of double sequence. 
\end{thm}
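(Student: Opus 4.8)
The plan is to reformulate the conclusion and then locate the required cluster point by combining the two hypotheses through the descending-closed-set form of the Lemma. Writing $A_U=\{(m,n)\in\mathbb{N}\times\mathbb{N}:x_{mn}\in U\}$, a point $p$ is an $I$-cluster point of a given double sequence $\{x_{ij}\}$ precisely when $A_U\notin I$ for every open $U\ni p$. Since $X$ is first countable, I would first fix, at each candidate point $p$, a decreasing countable neighbourhood base $U_1\supseteq U_2\supseteq\cdots$; because every open $V\ni p$ contains some $U_k$ and then $A_V\supseteq A_{U_k}$, the point $p$ is an $I$-cluster point iff $A_{U_k}\notin I$ for every $k$. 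Thus first countability reduces the defining condition from uncountably many neighbourhoods to a single descending sequence of index sets, which is the first place the hypothesis is used.

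Next I would manufacture a candidate point from countable compactness. For $n\in\mathbb{N}$ put $T_n=\{(i,j):i\ge n,\ j\ge n\}$. Its complement is $(\{1,\dots,n-1\}\times\mathbb{N})\cup(\mathbb{N}\times\{1,\dots,n-1\})$, a finite union of ``rows'' and ``columns'', each of which lies in $I$ because $I$ is strongly admissible; hence $T_n^{\,c}\in I$, so $T_n\notin I$. Setting $F_n=\overline{\{x_{ij}:(i,j)\in T_n\}}$ gives a descending chain $F_1\supseteq F_2\supseteq\cdots$ of non-empty closed sets, so by part (c) of the Lemma $\bigcap_{n}F_n\neq\emptyset$. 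Any $p$ in this intersection meets every tail: each neighbourhood $U$ of $p$ satisfies $A_U\cap T_n\neq\emptyset$ for all $n$, so $A_U$ is unbounded in both coordinates.

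The crux, and the step I expect to be the main obstacle, is to upgrade such a Pringsheim cluster point to a genuine $I$-cluster point, i.e. to guarantee $A_{U_k}\notin I$ rather than merely that ``$A_{U_k}$ meets every tail''. For the smallest strongly admissible ideal (finite unions of rows and columns) the two notions coincide, since a set meeting every $T_n$ cannot be covered by finitely many rows and columns; but for a larger strongly admissible ideal (for instance a density-type ideal) meeting every tail need not force $I$-positivity. To handle the general case I would exploit the coideal property of $I$ --- if $A\notin I$ and $A=B_1\cup\dots\cup B_r$ then some $B_s\notin I$ --- together with the neighbourhood base from the first step: starting from the $I$-positive set $\mathbb{N}\times\mathbb{N}$ and splitting successively according to a shrinking family of open sets, one keeps at each stage an $I$-positive piece, and then uses countable compactness (equivalently, via first countability, sequential compactness) to force the nested pieces to close down on a single point $p$, for which $A_{U_k}\notin I$ holds at every level. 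Verifying that first countability and countable compactness together suffice to carry out this fusion, without recourse to a transfinite (Cantor--Bendixson type) recursion, is the delicate point; it is exactly here that both hypotheses are indispensable, and once it is settled the resulting $p$ is the desired $I$-cluster point of $\{x_{ij}\}$, showing that $X$ is $I$-sequentially compact.
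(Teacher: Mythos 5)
Your construction of the candidate point coincides with the paper's: you form the index tails $T_n=\{(i,j):i\ge n,\ j\ge n\}$, take the descending chain of non-empty closed sets $F_n=\overline{\{x_{ij}:(i,j)\in T_n\}}$, apply part (c) of the Lemma to get $p\in\bigcap_n F_n$, and use first countability to work with a nested countable base at $p$. The genuine gap is in your last step, and you name it yourself: you never prove that $A_{U_k}\notin I$ for each member $U_k$ of that base. The ``coideal splitting plus fusion'' sketch cannot do this as described: the coideal property you cite controls only \emph{finite} partitions of an $I$-positive set, while the obstruction you correctly identified (for instance the strongly admissible ideal generated by the rows, the columns and the diagonal $\{(n,n):n\in\mathbb{N}\}$, for which a set can meet every tail and still belong to $I$) is not a finite-partition phenomenon; and the assertion that countable compactness will make the nested $I$-positive pieces ``close down on a single point'' with $I$-positivity surviving at every level is a restatement of the goal, not an argument. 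As written, your proposal establishes only that every neighbourhood of $p$ meets every tail, i.e.\ that $p$ is a Pringsheim-type cluster point, which you yourself note is weaker than being an $I$-cluster point.

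The paper closes this step by a different device, which your plan misses: having chosen $a_n\in B_n(y)\cap T_n$, it forms the auxiliary double sequence $y_{ij}=a_i$ (constant along each row), checks that it converges to $y$ in Pringsheim's sense, invokes strong admissibility to conclude that $\{y_{ij}\}$ $I$-converges to $y$, hence $\{(i,j):y_{ij}\in U\}\notin I$ for every open $U\ni y$, and finally transfers this $I$-positivity to the original sequence via the inclusion $\{(i,j):y_{ij}\in U\}\subset\{(i,j):x_{ij}\in U\}$, justified there only by the containment of ranges. You should be aware that this transfer is exactly the pressure point you flagged: containment of ranges does not by itself yield containment of index sets (each $a_i$ equals some $x_{k_i l_i}$ with $k_i,l_i\ge i$, but these index pairs may form a very thin set, e.g.\ a subset of the diagonal). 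So your diagnosis of where the real difficulty lies is accurate and is not dissolved by the paper's concluding inclusion; but diagnosing the difficulty is not the same as resolving it, and your proposal does not contain a proof of the theorem.
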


\begin{proof}
Let $(X,\tau)$ be a first countable countably compact space and let $\left\{x_{ij}\right\}$ be a double sequence in X. We consider $P_{n}=\left\{x_{ij}: i\geq n , j \geq n\right\}$ for $n \in \mathbb{N}$. Let R be the range set of the double sequence $\left\{x_{ij}\right\}$ and let $T_{n}=\left\{x_{ij} \in R :x_{ij} \in P_{n}\right\}$.
Now $\left\{ \overline{T_{n}} \right\}$ is a descending family of nonempty closed sets and hence by above lemma we have $\bigcap _{n=1} ^{\infty} \overline{T_{n}} \neq \phi$. Let $y \in \bigcap _{n=1} ^{\infty} \overline{T_{n}}$. Since $(X,\tau)$ is first countable there exists a nested countable local base $\left\{B_{n}(y)\right\}_{n=1} ^{\infty}$ at $y$. Also we have $B_{n}(y) \cap T_{n} \neq \phi $ for each n. Now let us take an arbitrary element $a_{n} \in B_{n}(y) \cap T_{n}$, for each $n \in \mathbb{N}$. We define a double sequence $\left\{y_{ij} \right\}$ where for each $i \in \mathbb{N}$, $y_{ij} = a_{i}$ for all $j \in \mathbb{N}$. We shall show that $\left\{y_{ij}\right\}$ converges to $y$ in Pringsheim's sense. Let V be an arbitrary open set containing $y$. Then there exists a $k\in \mathbb{N}$ such that $y \in B_{k} \subset V$ and since $B_{n}(y) \supset B_{n+1}(y)$ for all $n \in \mathbb{N}$, it follows that for all $i,j>k-1$, $y_{ij}\in V$ and hence $\left\{y_{ij}\right\}$ converges to $y$ in Pringsheim's sense.
Now since $I$ is a strongly admissible ideal of $\mathbb{N} \times \mathbb{N}$, $\left\{y_{ij}\right\}$ $I$-converges to $y$. So for any  open set U containing  $y$, $\left\{(i,j) \in \mathbb{N} \times \mathbb{N}:y_{ij}\notin U\right\} \in I$. Therefore, $\left\{(i,j) \in \mathbb{N} \times \mathbb{N}:y_{ij}\in U\right\} \notin I$, since $I$ is non trivial. Clearly the range set of $\left\{y_{ij}\right\}$ is a subset of the range set of $\left\{x_{ij}\right\}$ and hence $\left\{(i,j) \in \mathbb{N} \times \mathbb{N}:y_{ij}\in U\right\} \subset \left\{(i,j) \in \mathbb{N} \times \mathbb{N}:x_{ij}\in U\right\}$. Therefore $\left\{(i,j) \in \mathbb{N} \times \mathbb{N}:x_{ij}\in U\right\} \notin I$ and hence $(X,\tau)$ is $I$-sequentially compact.\end{proof}


\end{document}